\newcommand{\tpmod}[1]{{\@displayfalse\pmod{#1}}}
\newcommand{\Mod}[1]{\ (\mathrm{mod}\enspace #1)}
\newcommand{\ord}{\operatorname{ord}}
\newtheorem{thm}{Theorem}[section]
\newtheorem{lemma}[thm]{Lemma}
\newtheorem{cor}[thm]{Corollary}
\theoremstyle{definition}
\newtheorem{ex}[thm]{Example}
\theoremstyle{remark}
\theoremstyle{definition}
    \newtheorem{defn}[thm]{Definition}
\newtheorem{rem}[thm]{Remark}
\newcommand{\abs}[1]{\left|{#1}\right|}
\def\Z {{\mathbb Z}}
\def\NN {{\mathcal N}}
\def\Q {{\mathbb Q}}
\def\C {{\mathcal C}}
\def\D {{\mathcal D}}
\def\U {{\Upsilon}}
\def\F {{\mathbb F}}
\def\D {{\mathcal D}}
\def\Z {{\mathbb Z}}
\def\Q {{\mathbb Q}}
\def\C {{\mathbb C}}
\def\CC {{\mathcal C}}
\def\Gal{{\mbox {Gal} }}
\def\red#1 {\textcolor{red}{#1 }}
\def\blue#1 {\textcolor{blue}{#1 }}
\numberwithin{equation}{section}
\def\Z {{\mathbb Z}}
\begin{document}

\title[Power-Compositional Characteristic Polynomials]{The Monogenicity of Power-Compositional Characteristic Polynomials}


\author{Lenny Jones}
\address{Professor Emeritus, Department of Mathematics, Shippensburg University, Shippensburg, Pennsylvania 17257, USA}
\email[Lenny~Jones]{doctorlennyjones@gmail.com}

\date{\today}

\begin{abstract} Let $f(x)\in \Z[x]$ be monic with $\deg(f)=N\ge 2$. Suppose that $f(x)$ is monogenic, and that $f(x)$ is the characteristic polynomial of the $N$th order linear recurrence sequence $\Upsilon_f:=(U_n)_{n\ge 0}$ with initial conditions
 \[U_0=U_1=\cdots =U_{N-2}=0 \quad \mbox{and} \quad U_{N-1}=1.\] Let $p$ be a prime such that $f(x)$ is irreducible over $\F_p$ and $f(x^p)$ is irreducible over $\Q$. We prove that $f(x^p)$ is monogenic if and only if $\pi(p^2)\ne \pi(p)$, where $\pi(m)$ denotes the period of $\Upsilon_f$ modulo $m$. These results extend previous work of the author, and provide a new and simple test for the monogenicity of $f(x^p)$. We also provide some infinite families of such polynomials. This article extends previous work of the author.
   \end{abstract}

\subjclass[2020]{Primary 11R04, 11B39, Secondary 11R09, 12F05}
\keywords{monogenic, power-compositional, characteristic polynomial} 

\maketitle
\section{Introduction}\label{Section:Intro}

For a monic polynomial $f(x)\in {\mathbb Z}[x]$ of degree $N\ge 2$ that is irreducible over ${\mathbb Q}$, we say that $f(x)$ is \emph{monogenic} if  $\Theta=\{1,\theta,\theta^2,\ldots ,\theta^{N-1}\}$ is a basis for the ring of integers ${\mathbb Z}_K$ of $K={\mathbb Q}(\theta)$, where $f(\theta)=0$. Such a basis $\Theta$ is called a \emph{power basis}.
Since \cite{Cohen}
\begin{equation} \label{Eq:Dis-Dis}
\Delta(f)=\left[\Z_K:\Z[\theta]\right]^2\Delta(K),
\end{equation}
 where $\Delta(f)$ and $\Delta(K)$ denote, respectively, the discriminants over $\Q$ of $f(x)$ and $K$, we see that $f(x)$ is monogenic if and only if $\left[\Z_K:\Z[\theta]\right]=1$ or, equivalently, $\Delta(f)=\Delta(K)$. Thus, a sufficient, but not necessary, condition for the monogenicity of $f(x)$ is that $\Delta(f)$ is squarefree. Also, to be clear, it is certainly possible that there exists a power basis for $\Z_K$ despite the fact that $f(x)$ is not monogenic.

 Let $N$ and $a_0, a_1,\ldots ,a_N$ be integers with $N\ge 2$. Define
  $\U_f:=(U_n)_{n\ge 0}$ to be the $N$th order linear recurrence sequence such that
 \begin{align}\label{Eq:Upsilon}
 \begin{split}
 U_0&=U_1=\cdots =U_{N-2}=0, \quad U_{N-1}=1 \quad \mbox{and}\\
 U_n&=a_{1}U_{n-1}+a_{2}U_{n-2}+\cdots +a_{N-1}U_{n-N+1}+a_NU_{n-N} \quad \mbox{for all $n\ge N$,}
 \end{split}
 \end{align} so that
  \begin{equation}\label{Eq:f}
 f(x):=x^N-a_{1}x^{N-1}-a_{2}x^{N-2}-\cdots -a_{N-1}x-a_N,
 \end{equation} is the characteristic polynomial of $\U_f$. It is well known that $\U_f$ is periodic modulo any integer $m\ge 2$ with $\gcd(m,a_N)=1$,
 and we denote the length of the period as $\pi(m)$.

\begin{defn}\label{Def:p-irreducible}
  Given a monic polynomial $f(x)\in \Z[x]$ and a prime $p$, we say that \emph{$f(x)$ is $p$-irreducible} if
  \[f(x) \mbox{ is irreducible over } \F_p \quad \mbox{and} \quad f(x^p)  \mbox{ is irreducible over } \Q.\]
\end{defn}
Note that if $f(x)$ is $p$-irreducible, then $\gcd(p,f(0))=1$. Consequently, if $f(x)$ is $p$-irreducible and $f(x)$ is the characteristic polynomial for $\U_f$, as defined in \eqref{Eq:Upsilon}, then $\U_f$ is periodic modulo $p$.


 \begin{defn}\label{Def:U-prime}
Let $\U_f$ and $f(x)$ be as defined in \eqref{Eq:Upsilon} and \eqref{Eq:f}. Suppose that $p$ is a prime such that $f(x)$ is $p$-irreducible. We say that $p$ is an \emph{$\U_f$-prime} if $\pi(p^2)=\pi(p)$. 
\end{defn}
In this article, we prove the following:
\begin{thm}\label{Thm:Main1}
Let $\U_f$ and $f(x)$ be as defined in \eqref{Eq:Upsilon} and \eqref{Eq:f}, with $f(x)$ monogenic. Let $p$ be a prime such that $f(x)$ is $p$-irreducible. Then $f(x^p)$ is monogenic if and only if $\pi(p^2)\ne \pi(p)$.
  \end{thm}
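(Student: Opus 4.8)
The plan is to express the monogenicity of $f(x^p)$ through Dedekind's criterion, prime by prime, after computing the relevant discriminant. Write $\theta$ for a root of $f$, set $K=\Q(\theta)$, and let $\alpha$ be a root of $f(x^p)$, so that $\alpha^p=\theta$ and $K'=\Q(\alpha)$. Since $\frac{d}{dx}f(x^p)=px^{p-1}f'(x^p)$, a direct resultant computation gives
\[
\Delta\big(f(x^p)\big)=\pm\,p^{Np}\,f(0)^{p-1}\,\Delta(f)^{\,p}.
\]
Because $f$ is monogenic we have $\Delta(f)=\Delta(K)$ and $\Z_K=\Z[\theta]$, and since $\theta=\alpha^p$ we also have $\Z[\alpha]=\Z_K[\alpha]$. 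Thus $f(x^p)$ is monogenic exactly when $\Z[\alpha]$ is maximal, and by the displayed formula the only primes that can divide $[\Z_{K'}:\Z[\alpha]]$ are $p$, the primes dividing $f(0)$, and the primes dividing $\Delta(K)$.

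For a prime $q\neq p$ I would show that $q\nmid[\Z_{K'}:\Z[\alpha]]$. If $\bar f=\prod_i\overline{\phi_i}^{\,e_i}$ over $\F_q$, then $f(x^p)\equiv\prod_i\overline{\phi_i}(x^p)^{e_i}\pmod q$, and since $q\neq p$ each factor $\overline{\phi_i}(x^p)$ with $\overline{\phi_i}\neq x$ is separable; hence the multiplicity pattern of $f(x^p)$ over $\F_q$ mirrors that of $f$, and Dedekind's criterion for $f(x^p)$ at $q$ reduces to the corresponding criterion for $f$, which holds since $f$ is monogenic. The only case needing separate care is $q\mid f(0)$, where a repeated factor $x^{ps}$ appears; here one checks that the ramification of $K'/K$ above $q$ absorbs the factor $f(0)^{p-1}$, and it is precisely at this step that the hypotheses on $f$ are used.

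The heart of the theorem is the prime $p$. Here $f(x^p)\equiv f(x)^p\pmod p$ with $\bar f$ irreducible over $\F_p$, so $\bar f$ is the unique repeated irreducible factor and Dedekind's criterion states that $p\nmid[\Z_{K'}:\Z[\alpha]]$ if and only if $\bar f\nmid\bar M$, where $M(x)=\big(f(x^p)-f(x)^p\big)/p$. Evaluating the identity $f(x^p)-f(x)^p=pM(x)$ at $x=\theta$ yields $f(\theta^p)=pM(\theta)$ in $\Z_K$. As $\bar f$ is irreducible, $p$ is inert and $\Z_K/p\Z_K=\F_{p^N}$, so $\bar f\mid\bar M$ is equivalent to $M(\theta)\in p\Z_K$, that is, to $f(\theta^p)\equiv0\pmod{p^2\Z_K}$. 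Hence $f(x^p)$ is monogenic if and only if $f(\theta^p)\not\equiv0\pmod{p^2}$.

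The remaining, and I expect hardest, step is to match this congruence with the period condition. Since the impulse sequence has period equal to the multiplicative order of the companion matrix, and $p$ is inert, one has $\pi(p)=\ord_{\F_{p^N}^{\ast}}(\theta)$, which is prime to $p$; from the structure $(\Z_K/p^2\Z_K)^{\ast}\cong\F_{p^N}^{\ast}\times\big(1+p\Z_K/p^2\Z_K\big)$ it follows that the order of $\theta$ modulo $p^2$ is $\pi(p)$ or $p\,\pi(p)$, and that $\pi(p^2)=\pi(p)$ if and only if $\theta^{\pi(p)}\equiv1\pmod{p^2}$. To link this with $f(\theta^p)$ I would pass to the unramified extension $\Z_K\otimes\Z_p$ with Frobenius $\sigma$: Hensel's lemma identifies the exact root of $f$ congruent to $\theta^p$ modulo $p$ as $\sigma\theta$, so $f(\theta^p)\equiv f'(\sigma\theta)(\theta^p-\sigma\theta)\pmod{p^2}$ with $f'(\sigma\theta)$ a unit, giving $f(\theta^p)\equiv0\pmod{p^2}\iff\theta^p\equiv\sigma\theta\pmod{p^2}$. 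Comparing $\theta$ with its Teichm\"uller representative then shows $\theta^p\equiv\sigma\theta\pmod{p^2}$ is equivalent to $\theta^{\pi(p)}\equiv1\pmod{p^2}$. Chaining these equivalences gives that $f(x^p)$ is monogenic if and only if $\pi(p^2)\neq\pi(p)$; the subtle point throughout is this bridge between the order of $\theta$ modulo $p^2$ and the Dedekind quantity $f(\theta^p)\bmod p^2$.
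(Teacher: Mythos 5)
Your strategy coincides with the paper's at every structural point: use the composition formula for discriminants together with $\Delta(L)^{[K:L]}\mid\Delta(K)$ to confine the index $[\Z_{K'}:\Z[\alpha]]$ to finitely many primes, apply Dedekind's criterion at $p$ (with $\overline{f(x^p)}=\overline{f}(x)^p$) to reduce monogenicity at $p$ to the condition $f(\theta^p)\not\equiv 0\pmod{p^2}$, and then translate that congruence into the period condition. Your handling of the prime $p$ is correct and matches the paper's, and your final bridge --- identifying $\pi(p)$ with $\ord(\theta)$ in $\F_{p^N}^{\ast}$, noting $\pi(p^2)\in\{\pi(p),p\pi(p)\}$, and comparing $\theta^p$ with $\sigma\theta$ via the Teichm\"uller representative --- is a clean $p$-adic repackaging of Lemma \ref{Lem:Main1}, which the paper instead phrases through Hensel lifts of the roots $\alpha^{p^j}$ and Robinson's identification of $\pi(m)$ with the order of the companion matrix.

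The genuine gap is the sentence asserting that, for $q\mid f(0)$, ``the ramification of $K'/K$ above $q$ absorbs the factor $f(0)^{p-1}$.'' This is not proved, and it is false in general. Your discriminant formula $\Delta(f(x^p))=\pm p^{Np}f(0)^{p-1}\Delta(f)^p$ is the correct one (note that $\mathrm{Res}(f(x^p),px^{p-1})=\pm p^{Np}f(0)^{p-1}$; the paper's equation \eqref{Eq:Delfcircg} silently omits $f(0)^{p-1}$), so primes dividing $f(0)$ genuinely can divide the index. Locally: if $\mathfrak q\nmid p$ is a prime of $K$ with $v_{\mathfrak q}(\theta)=e>0$ and $p\nmid e$, then $x^p-\theta$ cuts out a tamely, totally ramified extension $K'_{\mathfrak Q}/K_{\mathfrak q}$ with different $\mathfrak Q^{\,p-1}$, while $v_{\mathfrak Q}(p\alpha^{p-1})=(p-1)e$; the conductor of $\O_{K_{\mathfrak q}}[\alpha]$ is therefore $\mathfrak Q^{(p-1)(e-1)}$, nontrivial whenever $e\ge 2$. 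Concretely, $f(x)=x^2-x-4$ is monogenic ($\Delta(f)=17$) and $3$-irreducible, with $\pi(9)=24\ne 8=\pi(3)$, yet for $f(x^3)=x^6-x^3-4\equiv x^3(x+1)(x^2+x+1)\pmod 2$ Dedekind's criterion at $q=2$ gives $\gcd(\overline F,\overline g,\overline h)$ divisible by $x$, so $f(x^3)$ is not monogenic; here $(\theta)=\mathfrak q_1^2$ for one of the two primes of $\Q(\sqrt{17})$ above $2$. So this step cannot be completed without an additional hypothesis (for instance $|f(0)|=1$ or $f(0)$ squarefree, which holds in every example in the paper but is not part of the theorem's statement). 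In short, your write-up is the more careful of the two at exactly this point --- it exposes a case the paper's own proof skips via the erroneous \eqref{Eq:Delfcircg} --- but the assertion you use to dispose of that case is the one step of your argument that does not survive scrutiny.
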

\begin{rem}
  Theorem \ref{Thm:Main1} extends previous work of the author \cite{JonesTJM, JonesFACM, JonesAJM}.
\end{rem}
  Theorem \ref{Thm:Main1} provides us with the following new and simple test for the monogenicity of $f(x^p)$ when $f(x)$ is monogenic and $p$-irreducible.
  \begin{cor}\label{Cor:Main1}
    Let $\U_f$ and $f(x)$ be as defined in \eqref{Eq:Upsilon} and \eqref{Eq:f}, with $f(x)$ monogenic. Let $p$ be a prime such that $f(x)$ is $p$-irreducible. Then $f(x^p)$ is monogenic if and only if $\pi(p^2)\equiv 0 \pmod{p}$.
  \end{cor}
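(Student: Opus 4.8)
The plan is to obtain Corollary~\ref{Cor:Main1} directly from Theorem~\ref{Thm:Main1} by proving the arithmetic equivalence
\[
\pi(p^2)\neq \pi(p)\quad\Longleftrightarrow\quad \pi(p^2)\equiv 0\pmod{p}.
\]
Once this is in hand, the corollary is immediate, since Theorem~\ref{Thm:Main1} identifies the monogenicity of $f(x^p)$ with the condition $\pi(p^2)\neq\pi(p)$, which the equivalence rewrites as $\pi(p^2)\equiv 0\pmod p$. Thus the whole task is to understand how $\pi(p^2)$ relates to $\pi(p)$.

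To do this I would pass to the companion matrix $M\in\Z^{N\times N}$ of $f(x)$ and the initial state vector $v_0=(U_0,\dots,U_{N-1})^{T}=(0,\dots,0,1)^{T}$. Since $f(x)$ is $p$-irreducible we have $\gcd(p,a_N)=1$, so $M$ is invertible modulo every power of $p$, and $\pi(p^k)$ equals the least $\ell\ge 1$ with $M^{\ell}v_0\equiv v_0\pmod{p^k}$; reducing modulo $p$ then shows $\pi(p)\mid\pi(p^2)$. The first key point is that $p\nmid\pi(p)$. As $f(x)$ is irreducible over $\F_p$, the $\F_p$-vector space $\F_p^{N}$ may be identified with the field $\F_{p^N}$, on which $M$ acts as multiplication by a root $\theta$ of $f$; because $v_0\neq 0$ in this field, $M^{\ell}v_0=\theta^{\ell}v_0=v_0$ forces $\theta^{\ell}=1$, so $\pi(p)=\ord(\theta)$. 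Since $\ord(\theta)$ divides $|\F_{p^N}^{\times}|=p^N-1$, which is coprime to $p$, we conclude $\gcd(\pi(p),p)=1$.

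The second key point is the dichotomy $\pi(p^2)\in\{\pi(p),\,p\,\pi(p)\}$. Writing $\ell=\pi(p)$, the computation above gives $M^{\ell}\equiv I\pmod p$, say $M^{\ell}=I+pB$ with $B\in\Z^{N\times N}$. A binomial expansion yields $M^{\ell k}\equiv I+kpB\pmod{p^2}$, so $M^{\ell k}v_0\equiv v_0\pmod{p^2}$ holds if and only if $k\,(Bv_0)\equiv 0\pmod{p}$. Hence the least valid $k$ is $1$ when $Bv_0\equiv 0\pmod p$, giving $\pi(p^2)=\pi(p)$, and is $p$ otherwise, giving $\pi(p^2)=p\,\pi(p)$. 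Combining the two points, $\pi(p^2)=\pi(p)$ forces $p\nmid\pi(p^2)$, whereas $\pi(p^2)\neq\pi(p)$ forces $\pi(p^2)=p\,\pi(p)\equiv 0\pmod p$; this is exactly the required equivalence.

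I expect the main obstacle to be the clean identification $\pi(p)=\ord(\theta)$ via the passage to the field $\F_{p^N}$, which is precisely where the irreducibility of $f(x)$ over $\F_p$ is used; the subsequent step modulo $p^2$ is a routine lifting-the-exponent computation. If the two period facts above are recorded as separate lemmas, then the proof of the corollary reduces to a one-line combination of that equivalence with Theorem~\ref{Thm:Main1}.
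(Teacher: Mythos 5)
Your proof is correct and follows essentially the same route as the paper: the paper deduces the corollary from Theorem~\ref{Thm:Main1} together with items \ref{Per:I2} and \ref{Per:I3} of Lemma~\ref{Lem:Main1}, which supply exactly your two key facts, namely $p\nmid\pi(p)$ and $\pi(p^2)\in\{\pi(p),\,p\,\pi(p)\}$. The only difference is that you re-derive these facts directly via the companion matrix and a binomial expansion, whereas the paper obtains $p\nmid\pi(p)$ from $\pi(p)\mid p^N-1$ and cites Robinson for the dichotomy.
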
 
 As an example to illustrate the concepts presented here, suppose that $\U_f$ is defined as in \eqref{Eq:Upsilon}, such that
 \[f(x)=x^4+21x^3+86x^2+21x+1\] is the characteristic polynomial of $\U_f$. Using Maple, we can easily confirm that $f(x)$ is monogenic, $f(x)$ is  $p$-irreducible for $p=37$, and that
 \[\pi(37^2)=\pi(37)=137\not \equiv 0 \Mod{37}.\] Thus, $p=37$ is an $\U_f$-prime and $f(x^{37})$ is not monogenic by Corollary \ref{Cor:Main1}.

\section{Preliminaries}\label{Section:Prelim}

The following theorem, known as \emph{Dedekind's Index Criterion}, or simply \emph{Dedekind's Criterion} if the context is clear, is a standard tool used in determining the monogenicity of a monic polynomial that is irreducible over $\Q$.
\begin{thm}[Dedekind \cite{Cohen}]\label{Thm:Dedekind}
Let $K=\Q(\theta)$ be a number field, $T(x)\in \Z[x]$ the monic minimal polynomial of $\theta$, and $\Z_K$ the ring of integers of $K$. Let $q$ be a prime number and let $\overline{ * }$ denote reduction of $*$ modulo $q$ (in $\Z$, $\Z[x]$ or $\Z[\theta]$). Let
\[\overline{T}(x)=\prod_{i}\overline{t_i}(x)^{e_i}\]
be the factorization of $T(x)$ modulo $q$ in $\F_q[x]$, and set
\[g(x)=\prod_{i}t_i(x),\]
where the $t_i(x)\in \Z[x]$ are arbitrary monic lifts of the $\overline{t_i}(x)$. Let $h(x)\in \Z[x]$ be a monic lift of $\overline{T}(x)/\overline{g}(x)$ and set
\[F(x)=\dfrac{g(x)h(x)-T(x)}{q}\in \Z[x].\]
Then
\[\left[\Z_K:\Z[\theta]\right]\not \equiv 0 \pmod{q} \Longleftrightarrow \gcd\left(\overline{F},\overline{g},\overline{h}\right)=1 \mbox{ in } \F_q[x].\]
\end{thm}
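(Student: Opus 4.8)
The plan is to recast the statement as a criterion for the order $\mathcal{O}:=\Z[\theta]$ to be maximal at $q$, and then to verify it one prime at a time. First I would record the standard reduction: since $\Z_K/\mathcal{O}$ is a finite abelian group, $q\mid[\Z_K:\mathcal{O}]$ if and only if $\mathcal{O}_{\mathfrak p}\subsetneq(\Z_K)_{\mathfrak p}$ for some maximal ideal $\mathfrak p$ of $\mathcal{O}$ lying over $q$; and this inclusion is an equality exactly when $\mathcal{O}_{\mathfrak p}$ is integrally closed, hence, being a one-dimensional Noetherian local domain, a DVR. The isomorphism $\mathcal{O}/q\mathcal{O}\cong \F_q[x]/(\overline T)$ then identifies the primes $\mathfrak p$ over $q$ with the distinct irreducible factors $\overline{t_i}$ of $\overline T$, via $\mathfrak{p}_i=\left(q,t_i(\theta)\right)$. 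So the whole theorem reduces to deciding, for each $i$, whether $\mathcal{O}_{\mathfrak{p}_i}$ is a DVR.

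Next I would set up the local computation at a fixed $\mathfrak{p}_i$. Writing $\pi:=t_i(\theta)$, the identification of the $\overline{t_i}$-primary part of $\F_q[x]/(\overline T)$ with $\F_q[x]/(\overline{t_i}^{\,e_i})$ shows that $\pi^{e_i}\in q\mathcal{O}_{\mathfrak{p}_i}$ while $\pi^{e_i-1}\notin q\mathcal{O}_{\mathfrak{p}_i}$. The crucial input is the global identity coming from $T(\theta)=0$ together with $g(x)h(x)-T(x)=qF(x)$, namely $g(\theta)h(\theta)=qF(\theta)$. Since $g=\prod_j t_j$ and $h$ is a lift of $\prod_j \overline{t_j}^{\,e_j-1}$, localizing at $\mathfrak{p}_i$ turns every factor $t_j(\theta)$ with $j\ne i$ into a unit, so this identity becomes
\[
\pi^{e_i}\cdot(\text{unit})=q\,F_1,\qquad F_1\equiv F(\theta)\pmod{\mathfrak{p}_i},
\]
whence $F_1$ is a unit in $\mathcal{O}_{\mathfrak{p}_i}$ precisely when $\overline{t_i}\nmid \overline F$.

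From here the dichotomy is clean. When $e_i=1$ we have $\pi\in q\mathcal{O}_{\mathfrak{p}_i}$, so $\mathfrak{p}_i\mathcal{O}_{\mathfrak{p}_i}=(q)$ is principal and $\mathcal{O}_{\mathfrak{p}_i}$ is a DVR. When $e_i\ge 2$ and $\overline{t_i}\nmid\overline F$, the element $F_1$ is a unit, so $q=\pi^{e_i}\cdot(\text{unit})$ and $\mathfrak{p}_i\mathcal{O}_{\mathfrak{p}_i}=(q,\pi)=(\pi)$ is again principal. When $e_i\ge 2$ and $\overline{t_i}\mid\overline F$, I would assume $\mathcal{O}_{\mathfrak{p}_i}$ is a DVR with valuation $v$ and derive a contradiction: the relations $e_i\,v(\pi)\ge v(q)$, $(e_i-1)v(\pi)<v(q)$, and $e_i\,v(\pi)=v(q)+v(F_1)$ with $v(F_1)\ge 1$ together force $v(\pi)\ge 2$ and $v(q)\ge 2$, so both generators of $\mathfrak{p}_i\mathcal{O}_{\mathfrak{p}_i}$ lie in its square, contradicting Nakayama. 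Hence $\mathcal{O}_{\mathfrak{p}_i}$ is a DVR if and only if $e_i=1$ or $\overline{t_i}\nmid\overline F$. Assembling over all $i$, and using $\gcd(\overline g,\overline h)=\prod_{e_i\ge 2}\overline{t_i}$, gives $q\nmid\left[\Z_K:\mathcal{O}\right]\iff\gcd(\overline F,\overline g,\overline h)=1$.

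I expect the main obstacle to be the local valuation bookkeeping in the ramified case: correctly tracking the units absorbed into the identity $\pi^{e_i}\cdot(\text{unit})=qF_1$ and pushing the three inequalities to the Nakayama contradiction. The surrounding reductions—the index-versus-local-maximality dictionary and the Chinese Remainder identification of the $\mathfrak{p}_i$ with the factors $\overline{t_i}$—are standard, but the care needed to see that $\overline{t_i}\mid\overline F$ forces both $q$ and $\pi$ into $\mathfrak{p}_i^2\mathcal{O}_{\mathfrak{p}_i}$ is where the real content lies.
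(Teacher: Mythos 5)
The paper offers no proof of this theorem to compare against: it is quoted as a classical tool (Dedekind's index criterion) with a citation to Cohen, so the only question is whether your argument stands on its own---and it does. Your proposal is a correct outline of the standard localization proof. The reductions are sound: $q\mid\left[\Z_K:\Z[\theta]\right]$ if and only if some localization $\Z[\theta]_{\mathfrak{p}_i}$ at a prime $\mathfrak{p}_i=(q,t_i(\theta))$ over $q$ fails to be a DVR, and the isomorphism $\Z[\theta]/q\Z[\theta]\cong\mathbb{F}_q[x]/(\overline{T})$ together with the Chinese Remainder Theorem gives both the indexing of the $\mathfrak{p}_i$ by the $\overline{t_i}$ and the two inputs $\pi^{e_i}\in q\mathcal{O}_{\mathfrak{p}_i}$, $\pi^{e_i-1}\notin q\mathcal{O}_{\mathfrak{p}_i}$. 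The unit bookkeeping you flagged as the main risk does go through: writing $h(\theta)=\pi^{e_i-1}u_2+qc$ with $u_2\in\mathcal{O}_{\mathfrak{p}_i}^{\times}$, the identity $g(\theta)h(\theta)=qF(\theta)$ yields $\pi^{e_i}u_1u_2=q\bigl(F(\theta)-\pi u_1c\bigr)$, and $F_1:=F(\theta)-\pi u_1c\equiv F(\theta)\pmod{\mathfrak{p}_i}$ is a unit exactly when $\overline{t_i}\nmid\overline{F}$, since the residue field is $\mathbb{F}_q[x]/(\overline{t_i})$. The three-case dichotomy is right (in particular, when $e_i=1$ the factor $\overline{t_i}$ never divides $\gcd(\overline{g},\overline{h})$, consistent with that local ring always being a DVR), the Nakayama contradiction in the ramified case is correctly forced by the integer inequalities $e_iv(\pi)=v(q)+v(F_1)$, $v(F_1)\ge 1$, $(e_i-1)v(\pi)<v(q)$, and the final assembly via $\gcd(\overline{g},\overline{h})=\prod_{e_i\ge 2}\overline{t_i}$ matches the stated gcd condition exactly. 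For comparison with the cited source: Cohen's proof is more computational, in the spirit of the round-two algorithm (working with the radical of $q\Z[\theta]$ and $q$-maximality of the order), whereas your localization/DVR route is the cleaner structural argument, at the cost of invoking the dictionary between integral closedness and the DVR property for one-dimensional Noetherian local domains; both are standard, and yours is complete as sketched.
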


The next two theorems are due to Capelli \cite{S}.
 \begin{thm}\label{Thm:Capelli1}  Let $f(x)$ and $h(x)$ be polynomials in $\Q[x]$ with $f(x)$ irreducible. Suppose that $f(\alpha)=0$. Then $f(h(x))$ is reducible over $\Q$ if and only if $h(x)-\alpha$ is reducible over $\Q(\alpha)$.
 \end{thm}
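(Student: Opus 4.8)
The plan is to prove the logically equivalent statement that $f(h(x))$ is irreducible over $\Q$ if and only if $h(x)-\alpha$ is irreducible over $\Q(\alpha)$; negating both sides then recovers the reducibility formulation as stated. The entire argument reduces to a single application of the tower law, so I would first fix notation and degrees. Write $n=\deg f$ and $m=\deg h$, so that $\deg f(h(x))=mn$, while $[\Q(\alpha):\Q]=n$ because $f$ is irreducible. There is no loss in assuming $f$ is monic, since scaling by a nonzero rational constant changes neither the roots nor the reducibility of any polynomial involved.

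Next I would introduce a single auxiliary root and build a tower of fields. Choose $\beta\in\overline{\Q}$ with $h(\beta)=\alpha$, that is, a root of $h(x)-\alpha$. Since $f(h(\beta))=f(\alpha)=0$, this same $\beta$ is also a root of $f(h(x))$, and since $\alpha=h(\beta)\in\Q(\beta)$ we obtain the chain $\Q\subseteq\Q(\alpha)\subseteq\Q(\beta)$. Setting $d_1=[\Q(\beta):\Q(\alpha)]$ and $d_2=[\Q(\beta):\Q]$, the tower law gives $d_2=d_1 n$.

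The heart of the proof is then to read $d_1$ and $d_2$ off as reducibility criteria. The minimal polynomial of $\beta$ over $\Q(\alpha)$ is an irreducible factor of $h(x)-\alpha$, so $d_1\le m$, with $d_1=m$ exactly when $h(x)-\alpha$ is irreducible over $\Q(\alpha)$; likewise the minimal polynomial of $\beta$ over $\Q$ divides $f(h(x))$, so $d_2\le mn$, with $d_2=mn$ exactly when $f(h(x))$ is irreducible over $\Q$. Combining these two equivalences with the relation $d_2=d_1 n$ yields $d_2=mn\iff d_1=m$, which is precisely the claimed equivalence. I expect the only point requiring care is to verify \emph{both} directions of each degree equivalence: that $d_1=m$ forces $h(x)-\alpha$ irreducible and that $d_2=mn$ forces $f(h(x))$ irreducible. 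These follow from the standard fact that if a polynomial has a root whose minimal polynomial attains the full degree of that polynomial, then the polynomial is a scalar multiple of that minimal polynomial and hence irreducible; conversely any proper factorization drops the minimal-polynomial degree strictly below the full degree. One must also confirm that the single element $\beta$ legitimately serves as a root of both $h(x)-\alpha$ and $f(h(x))$ simultaneously, which holds by construction. The remaining verifications are routine field theory.
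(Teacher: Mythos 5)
Your argument is correct: the tower $\Q\subseteq\Q(\alpha)\subseteq\Q(\beta)$ with $\beta$ a root of $h(x)-\alpha$, together with the degree bounds $d_1\le m$, $d_2\le mn$ and the multiplicativity $d_2=d_1n$, is exactly the standard proof of this theorem of Capelli. The paper itself offers no proof (it cites Schinzel), so there is nothing to compare against, but your reasoning is complete modulo the harmless implicit assumption that $h$ is nonconstant.
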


\begin{thm}\label{Thm:Capelli2}  Let $c\in \Z$ with $c\geq 2$, and let $\alpha\in\C$ be algebraic.  Then $x^c-\alpha$ is reducible over $\Q(\alpha)$ if and only if either there is a prime $p$ dividing $c$ such that $\alpha=\gamma^p$ for some $\gamma\in\Q(\alpha)$ or $4\mid c$ and $\alpha=-4\gamma^4$ for some $\gamma\in\Q(\alpha)$.
\end{thm}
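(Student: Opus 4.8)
The plan is to prove the two implications separately, working with $K=\Q(\alpha)$ as a field of characteristic $0$ (so no inseparability issues arise) inside which every root of $x^c-\alpha$ may be taken in $\C$. The reducibility direction is the easy one and proceeds by exhibiting explicit factorizations. If $\alpha=\gamma^p$ for some prime $p\mid c$ and some $\gamma\in K$, then writing $c=pm$ with $1\le m<c$, the polynomial $x^m-\gamma$ is a proper factor of $x^c-\alpha=(x^m)^p-\gamma^p$. If instead $4\mid c$ and $\alpha=-4\gamma^4$, then with $m=c/4$ the Sophie Germain identity gives
\[
x^c-\alpha=x^{4m}+4\gamma^4=\bigl(x^{2m}-2\gamma x^m+2\gamma^2\bigr)\bigl(x^{2m}+2\gamma x^m+2\gamma^2\bigr),
\]
a product of two factors of degree $2m<c$. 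Either way $x^c-\alpha$ is reducible over $K$.

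For the converse I would prove the contrapositive: assuming $\alpha\notin K^p$ for every prime $p\mid c$, and $\alpha\notin -4K^4$ whenever $4\mid c$, I would show $x^c-\alpha$ is irreducible over $K$. The first building block is the prime case, namely that $x^p-\beta$ is irreducible over any field $F\ni\beta$ if and only if $\beta\notin F^p$. To establish this I would take a root $\mu$, let $d$ be the degree of its minimal polynomial over $F$, and observe that the constant term of that polynomial equals, up to sign and a $p$-th root of unity, $\mu^d$; since $\gcd(d,p)=1$, a B\'ezout relation $ud+vp=1$ then recovers $\beta\in F^p$ from $\beta^{d}\in F^p$.

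The core of the argument is the prime-power case $x^{p^r}-\alpha$, which I would handle by induction on $r$ through the tower $K\subset K(\gamma)\subset K(\beta)$, where $\gamma$ is a root of the inductively irreducible $x^{p^{r-1}}-\alpha$ and $\beta^p=\gamma$. By the prime case applied over $K(\gamma)$, irreducibility of $x^{p^r}-\alpha$ reduces to showing $\gamma\notin K(\gamma)^p$. A norm computation does most of the work: since the minimal polynomial of $\gamma$ over $K$ is $x^{p^{r-1}}-\alpha$, one has $N_{K(\gamma)/K}(\gamma)=(-1)^{p^{r-1}+1}\alpha$, so a relation $\gamma=\delta^p$ would force $(-1)^{p^{r-1}+1}\alpha=N_{K(\gamma)/K}(\delta)^p\in K^p$. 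For odd $p$ this reads $\alpha\in K^p$, contradicting the hypothesis, and the induction closes at once.

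The main obstacle is the prime $p=2$, where the norm yields only $-\alpha\in K^2$ rather than a contradiction, and the genuine exception $\alpha\in-4K^4$ must be extracted by hand. I would settle the base case $r=2$ by writing a putative square root $\delta=s+t\gamma\in K(\gamma)$ of $\gamma$ explicitly, equating $\delta^2=\gamma$ to obtain $s^2+\alpha t^2=0$ and $2st=1$, and eliminating to reach $\alpha=-4s^4\in-4K^4$, precisely the excluded case; the larger exponents $r>2$ I would reduce to this computation via a descent through the successive quadratic steps of the tower. Finally, the general modulus is assembled from its prime-power parts: if $\beta^c=\alpha$, then for each prime power $p^e\,\|\,c$ the element $\beta^{c/p^e}$ is a root of the (now known) irreducible $x^{p^e}-\alpha$, hence generates a subextension of $K(\beta)$ of degree $p^e$; since these degrees are pairwise coprime, their compositum lies in $K(\beta)$ and already has degree $c$ over $K$, forcing $[K(\beta):K]=c$ and the irreducibility of $x^c-\alpha$.
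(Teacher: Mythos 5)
The paper contains no proof of this statement to compare against: it is Capelli's theorem, quoted with a citation to Schinzel's book \cite{S} and used as a black box, so your proposal is supplying an argument the paper deliberately outsources. What you wrote is a correct reconstruction of the classical Capelli--Vahlen proof (essentially the argument in Lang's \emph{Algebra}, Theorem VI.9.1). The easy direction via $(x^m)^p-\gamma^p$ and the Sophie Germain factorization $x^{4m}+4\gamma^4=(x^{2m}-2\gamma x^m+2\gamma^2)(x^{2m}+2\gamma x^m+2\gamma^2)$ is right; so are the three pillars of the converse: the degree/B\'ezout argument for $x^p-\beta$ (note the root of unity multiplying $\mu^d$ is harmless precisely because its $p$-th power is $1$, so $\beta^d\in F^p$ even though the root of unity need not lie in $F$ --- worth making explicit), the norm computation $N_{K(\gamma)/K}(\gamma)=(-1)^{p^{r-1}+1}\alpha$ disposing of odd $p$, and the elimination from $s^2+\alpha t^2=0$, $2st=1$ giving $\alpha=-4s^4$ at $p=2$, $r=2$. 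The coprime-degree compositum assembly at the end is also standard and sound. The one soft spot is the phrase ``descent through the successive quadratic steps'' for $p=2$, $r>2$: in your tower as set up, $\gamma$ has degree $2^{r-1}$ and the norm yields only $-\alpha\in K^2$, which is not yet a contradiction, and you do not say how the descent actually terminates. The clean repair is to run the induction from the bottom of the tower instead: adjoin $\gamma=\sqrt{\alpha}$ first, verify $\gamma\notin K(\gamma)^2$ and $\gamma\notin -4K(\gamma)^4$ (both follow from your same $s+t\gamma$ computation, the second because $-4\delta^4=-(2\delta^2)^2$ reduces it to $-\gamma$ being a square in $K(\gamma)$), and then apply the inductive hypothesis to $x^{2^{r-1}}-\gamma$ over $K(\gamma)$. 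With that rearrangement your proof is complete and matches the standard one in the literature the paper cites.
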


The following theorem (due to John Cullinan \cite{Cullinan}) gives the formula for the discriminant of the composition of two polynomials. For a proof, see \cite{HJmonocyclo}.
\begin{thm}\label{Thm:Disccomp}
Let $f(x)$ and $g(x)$ be polynomials in $\mathbb{Q}[x]$, with respective leading coefficients $a$ and $b$, and respective degrees $m$ and $n$.  Then
$$\Delta(f\circ g)=(-1)^{m^2n(n-1)/2}\cdot a^{n-1}b^{m(mn-n-1)}\Delta(f)^{n}Res(f\circ g,g'),$$ where $Res$ is the resultant.
\end{thm}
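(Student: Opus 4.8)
The plan is to derive the formula from the classical discriminant--resultant identity together with the multiplicativity of the resultant. Recall that for any $P(x)$ of degree $D$ and leading coefficient $c$ one has $\Delta(P)=(-1)^{D(D-1)/2}c^{-1}\operatorname{Res}(P,P')$. I would apply this to $P=f\circ g$, which has degree $D=mn$ and, since the top term of $f\circ g$ is $a(bx^{n})^{m}$, leading coefficient $c=ab^{m}$. The chain rule gives $(f\circ g)'=(f'\circ g)\cdot g'$, so by multiplicativity of the resultant in its second argument,
\[\operatorname{Res}(f\circ g,(f\circ g)')=\operatorname{Res}(f\circ g,f'\circ g)\cdot\operatorname{Res}(f\circ g,g').\]
The factor $\operatorname{Res}(f\circ g,g')$ is exactly the one appearing in the statement, so the whole problem reduces to evaluating $\operatorname{Res}(f\circ g,f'\circ g)$ in terms of $\Delta(f)$.

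For that I would pass to a splitting field. Writing $f(y)=a\prod_{i=1}^{m}(y-\alpha_i)$, the roots of $f\circ g$ are the roots $\beta_{ij}$ (for $1\le j\le n$) of $g(x)-\alpha_i$, so that $f\circ g=ab^{m}\prod_{i,j}(x-\beta_{ij})$ and $g(\beta_{ij})=\alpha_i$. Using $\operatorname{Res}(A,B)=\operatorname{lc}(A)^{\deg B}\prod_{A(\rho)=0}B(\rho)$ with $A=f\circ g$ and $B=f'\circ g$ gives
\[\operatorname{Res}(f\circ g,f'\circ g)=(ab^{m})^{(m-1)n}\prod_{i,j}f'(g(\beta_{ij}))=(ab^{m})^{(m-1)n}\Bigl(\prod_{i=1}^{m}f'(\alpha_i)\Bigr)^{n},\]
since $g(\beta_{ij})=\alpha_i$ and each $\alpha_i$ occurs for $n$ values of $j$. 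Finally I would substitute $\prod_i f'(\alpha_i)=(-1)^{m(m-1)/2}a^{2-m}\Delta(f)$, which itself comes from the degree-$m$ case of the discriminant--resultant identity applied to $f$.

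Collecting the powers of $a$ and $b$ is then routine: the $a$-exponents combine to $a^{n-1}$ and the $b$-exponents to $b^{m(mn-n-1)}$, while $\Delta(f)$ appears to the power $n$. The only genuinely delicate point is the sign. Assembling the three sources of $(-1)$ produces the exponent $\tfrac12\bigl(mn(mn-1)+m(m-1)n\bigr)$, and I would finish by checking that this differs from the claimed exponent $m^{2}n(n-1)/2$ by $mn(m-1)$, which is even because $m(m-1)$ is a product of consecutive integers. Hence the two signs agree and the stated formula follows.

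I expect this sign verification, rather than any conceptual step, to be the main obstacle, since it requires careful parity bookkeeping of the exponents of $-1$; the substitution $\prod_i f'(\alpha_i)\mapsto\Delta(f)$ and the final consolidation of $a$- and $b$-powers are mechanical once the root description of $f\circ g$ is in place. A secondary technical care-point is that all resultant identities are being used with a fixed convention for leading-coefficient normalization, so I would keep the normalization of $\operatorname{Res}$ uniform throughout to avoid spurious factors of $a$ or $b$.
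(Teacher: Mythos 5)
Your derivation is correct: the decomposition $\operatorname{Res}(f\circ g,(f\circ g)')=\operatorname{Res}(f\circ g,f'\circ g)\cdot\operatorname{Res}(f\circ g,g')$ via the chain rule, the evaluation $\operatorname{Res}(f\circ g,f'\circ g)=(ab^m)^{(m-1)n}\bigl(\prod_i f'(\alpha_i)\bigr)^n$ over the roots $\beta_{ij}$, and the final bookkeeping all check out, including the sign (the discrepancy $mn(m-1)$ is indeed even). Note that the paper itself gives no proof of this theorem --- it quotes the formula and defers to the reference \cite{HJmonocyclo} --- but the argument given there is essentially this same standard one, so your proposal matches the intended proof.
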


The next theorem follows from Corollary (2.10) in \cite{Neukirch}.
\begin{thm}\label{Thm:CD}
  Let $L$ and $K$ be number fields with $L\subset K$. Then \[\Delta(L)^{[K:L]} \bigm|\Delta(K).\]
\end{thm}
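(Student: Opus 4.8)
The plan is to reduce the claim to the standard transitivity (``tower'') formula for discriminants in the chain $\Q\subseteq L\subseteq K$. Writing $\mathfrak{d}_{K/\Q}$, $\mathfrak{d}_{L/\Q}$ for the absolute discriminant ideals of $K$ and $L$, so that $\mathfrak{d}_{K/\Q}=(\Delta(K))$ and $\mathfrak{d}_{L/\Q}=(\Delta(L))$ in $\Z$, and $\mathfrak{d}_{K/L}\subseteq\Z_L$ for the relative discriminant ideal of $K/L$, the formula reads
\[
\mathfrak{d}_{K/\Q}=\mathfrak{d}_{L/\Q}^{\,[K:L]}\cdot N_{L/\Q}\!\left(\mathfrak{d}_{K/L}\right),
\]
an identity of ideals of $\Z$. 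Since $N_{L/\Q}(\mathfrak{d}_{K/L})$ is an integral ideal, this immediately gives $(\Delta(L))^{[K:L]}\mid(\Delta(K))$, i.e. $\Delta(L)^{[K:L]}\mid\Delta(K)$. Thus the whole content is the tower formula, which is exactly Corollary (2.10) of \cite{Neukirch}; I indicate below how I would establish it to keep the argument self-contained and sign-free.

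Because $\Z_L$ need not be a principal ideal domain, $\Z_K$ need not be a free $\Z_L$-module and there may be no global relative integral basis; this is the one genuine subtlety, and I would sidestep it by localizing. Fix a rational prime $q$ and set $S=\Z\setminus q\Z$. Then $S^{-1}\Z_L$ is a semilocal Dedekind domain, hence a principal ideal domain, so the finitely generated torsion-free module $S^{-1}\Z_K$ is free over $S^{-1}\Z_L$ of rank $d:=[K:L]$. Choosing a relative integral basis $\{\eta_1,\dots,\eta_d\}$ of $S^{-1}\Z_K$ over $S^{-1}\Z_L$ and a $\Z_{(q)}$-basis $\{\omega_1,\dots,\omega_m\}$ of $S^{-1}\Z_L$ (with $m:=[L:\Q]$), the products $\{\omega_i\eta_j\}$ form a $\Z_{(q)}$-basis of $S^{-1}\Z_K$.

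The next step is the determinant identity for this product basis,
\[
\operatorname{disc}_{K/\Q}\!\left(\{\omega_i\eta_j\}\right)=\operatorname{disc}_{L/\Q}\!\left(\{\omega_i\}\right)^{d}\cdot N_{L/\Q}\!\left(\operatorname{disc}_{K/L}(\{\eta_j\})\right),
\]
which I would prove by expressing the $K/\Q$ trace-form matrix of $\{\omega_i\eta_j\}$ as a suitable (block/Kronecker) product built from the $L/\Q$ and $K/L$ trace-form matrices, using the transitivity of the trace $\tr_{K/\Q}=\tr_{L/\Q}\circ\tr_{K/L}$, and then taking determinants. Passing to $q$-adic valuations, the left side contributes $v_q(\Delta(K))$, the first factor on the right contributes $d\,v_q(\Delta(L))$, and the last factor contributes $v_q\!\left(N_{L/\Q}(\mathfrak{d}_{K/L})\right)\ge 0$; hence $v_q(\Delta(K))\ge d\,v_q(\Delta(L))$.

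Since $q$ was an arbitrary rational prime, $v_q(\Delta(K))\ge[K:L]\,v_q(\Delta(L))$ holds for all $q$, which is precisely $\Delta(L)^{[K:L]}\mid\Delta(K)$. The main obstacle, as flagged, is purely the possible non-freeness of $\Z_K$ over $\Z_L$; localizing at each $q$ removes it, because a localization of a Dedekind domain away from all but finitely many primes is a principal ideal domain, restoring the relative integral basis needed for the determinant identity. Alternatively, one may bypass the local computation and quote the different chain rule $\mathfrak{D}_{K/\Q}=\mathfrak{D}_{K/L}\cdot\left(\mathfrak{D}_{L/\Q}\Z_K\right)$, apply $N_{K/\Q}$, and use transitivity of the norm together with $N_{K/L}(\mathfrak{a}\Z_K)=\mathfrak{a}^{[K:L]}$ for ideals $\mathfrak{a}\subseteq\Z_L$ to recover the tower formula directly.
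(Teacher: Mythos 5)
Your argument is correct and is essentially the paper's own: the paper simply cites Corollary (2.10) of Neukirch, which is exactly the tower formula $\mathfrak{d}_{K/\Q}=\mathfrak{d}_{L/\Q}^{[K:L]}N_{L/\Q}(\mathfrak{d}_{K/L})$ that you invoke, and the divisibility follows since the norm factor is an integral ideal. Your additional localization sketch of the tower formula is sound but goes beyond what the paper records.
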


\section{The Proofs of Theorem \ref{Thm:Main1} and Corollary \ref{Cor:Main1}}\label{Section:Main1}
We first prove a lemma. 

  \begin{lemma}\label{Lem:Main1}
  Let $\U_f$ and $f(x)$ be as defined in \eqref{Eq:Upsilon} and \eqref{Eq:f}. Let $p$ be a prime such that $f(x)$ is $p$-irreducible,
     and suppose that $f(\alpha)=0$ with $\alpha\in \F_{p^N}$. 
    Then,
 \begin{enumerate}
 \item \label{Per:I0} the zeros of $f(x)$ in $\F_{p^N}$ are $r_j:=\alpha^{p^j}$ with $j\in J:=\{0,1,2,\ldots ,N-1\}$,
 and $\ord_p(r_j)=\pi(p)$,
    \item \label{Per:I1} modulo $p^2$, $f(x)$ has exactly $N$ distinct zeros $s_j$ with $j\in J$, and $\ord_{p^2}(s_j)=\pi(p^2)$,
    \item \label{Per:I2}  $p^N\equiv 1\Mod{\pi(p)}$,
    \item \label{Per:I3} $\pi(p^2)\in \{\pi(p),p\pi(p)\}$,
    \item \label{Per:I4} furthermore, if $\pi(p^2)=\pi(p)$, then the distinct zeros $s_j$ of $f(x)$ modulo $p^2$ are
    $\alpha^{p^j} \Mod{p^2}$. 
  \end{enumerate}
\end{lemma}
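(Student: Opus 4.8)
The plan is to reduce all five parts to a single device: the period $\pi(m)$ equals the multiplicative order of the companion matrix $C$ of $f(x)$ in $GL_N(\Z/m\Z)$. Indeed, writing $v_n=(U_n,\dots,U_{n+N-1})^{\mathsf T}$, the recurrence gives $v_{n+1}=Cv_n$, so $v_n=C^nv_0$ with $v_0=(0,\dots,0,1)^{\mathsf T}$. Since $v_0$ is a cyclic vector for the companion matrix, the vectors $v_0,\dots,v_{N-1}$ form a $\Z/m\Z$-basis, so the sequence has period dividing $k$ iff $v_k=v_0$ iff $C^kv_0=v_0$, and this last condition upgrades to $C^k=I$ because $C^k$ commutes with $C$ and hence fixes the whole basis. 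Thus $\pi(m)=\ord(C\bmod m)$, and whenever $C$ is diagonalizable (as will be the case modulo $p$ and modulo $p^2$), $\ord(C\bmod m)$ equals $\lcm$ of the multiplicative orders of the eigenvalues of $C$.

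For part (a) I work over $\F_{p^N}$. As $f$ is irreducible over $\F_p$ it is separable, and its roots are the Frobenius conjugates $r_j=\alpha^{p^j}$, $j\in J$, which are distinct and are exactly the eigenvalues of $C$. Since $x\mapsto x^p$ is an automorphism of $\F_{p^N}^{\times}$, all $r_j$ share a common order $d$, whence $\ord(C\bmod p)=\lcm_{j\in J}\ord_p(r_j)=d$ and therefore $\pi(p)=\ord_p(r_j)$. Part (c) is then immediate, since $d\mid\bigl|\F_{p^N}^{\times}\bigr|=p^N-1$, i.e.\ $p^N\equiv 1\pmod{\pi(p)}$.

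For part (b) I lift to the Galois ring $R:=(\Z/p^2\Z)[x]/(\tilde f)$ for a Hensel lift $\tilde f$ of $f$, equivalently $R=\O/p^2\O$ where $\O$ is the valuation ring of the unramified degree-$N$ extension of $\Q_p$. As $f$ is separable mod $p$, Hensel's lemma splits it over $R$ into $N$ linear factors with roots $s_j$ reducing to the distinct $r_j$; the differences $s_i-s_j$ are units, so $C$ is again diagonalizable over $R$ with eigenvalues $s_j$. The Galois group of $R$ over $\Z/p^2\Z$ is cyclic, generated by a ring automorphism that permutes the $s_j$ transitively, so they share a common order and $\pi(p^2)=\ord_{p^2}(s_j)$. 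Part (d) follows from a congruence estimate: reducing $C^{\pi(p^2)}\equiv I\pmod{p^2}$ modulo $p$ gives $\pi(p)\mid\pi(p^2)$, while writing $C^{\pi(p)}=I+pM$ and expanding $(I+pM)^p\equiv I\pmod{p^2}$ gives $\pi(p^2)\mid p\pi(p)$; as $p$ is prime, $\pi(p^2)/\pi(p)\in\{1,p\}$.

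Part (e) is the crux, and the obstacle is conceptual rather than computational. Assume $\pi(p^2)=\pi(p)=d$. By part (b) each $s_j$ then has $\ord_{p^2}(s_j)=d$, and $d\mid p^N-1$ is prime to $p$ by part (c). In the abelian group $R^{\times}$ of order $p^N(p^N-1)$, the elements of order prime to $p$ form exactly the cyclic group of Teichm\"uller representatives, on which reduction modulo $p$ is an isomorphism onto $\F_{p^N}^{\times}$; hence $s_j$ must be the Teichm\"uller lift $\omega(r_j)$. Multiplicativity of $\omega$ and the fact that Frobenius acts on roots of unity by the honest $p$-th power give $\omega(r_j)=\omega(\alpha^{p^j})=\omega(\alpha)^{p^j}$, which, reading $\alpha$ as its Teichm\"uller lift in $R$, is precisely $s_j=\alpha^{p^j}\pmod{p^2}$. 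The point to be careful about is exactly this: over $\F_{p^N}$ the Frobenius conjugate of a root is its literal $p$-th power, but over $R$ this fails for a generic lift, and the content of part (e) is that the hypothesis $\pi(p^2)=\pi(p)$ singles out the unique (Teichm\"uller) lift for which the two still coincide.
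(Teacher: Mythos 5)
Your proof is correct, and its skeleton matches the paper's: identify $\pi(m)$ with the order of the companion matrix $C$ modulo $m$, read the eigenvalues of $C$ as the roots of $f$ over $\F_{p^N}$ and their Hensel lifts modulo $p^2$, and deduce items (c) and (d) from there. The differences lie in what you prove versus what the paper cites or asserts. The paper outsources both the identity $\pi(m)=\ord(C \bmod m)$ and item (d) to Robinson's note, whereas you prove them from scratch (the cyclic-vector argument for the former, the expansion $(I+pM)^p\equiv I\pmod{p^2}$ for the latter), which makes the lemma self-contained at essentially no cost. More substantively, for item (e) the paper's proof consists of the assertion that when $\pi(p^2)=\pi(p)$ the elements $\alpha^{p^j}\bmod p^2$ are the Hensel lifts of the $r_j$; it does not explain why $\alpha^{p^j}$ (with $\alpha$ now a root of $f$ modulo $p^2$) is again a root modulo $p^2$, which is precisely the point you flag as the crux, since the Frobenius of the Galois ring is not the literal $p$-th power map on arbitrary elements. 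Your Teichm\"uller argument --- the hypothesis forces each $s_j$ to have order prime to $p$, hence to lie in the Teichm\"uller subgroup, on which reduction modulo $p$ is an isomorphism and the $p$-th power map does realize Frobenius --- supplies exactly the justification the paper leaves implicit, and is arguably the cleanest way to do it. In short: same route, but with the two load-bearing steps the paper delegates or elides actually carried out.
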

\begin{proof}
Since the Frobenius automorphism generates $\Gal_{\F_p}(f)$, we have that the zeros of $f(x)$ in $\F_p(\alpha)$ are $\alpha^{p^j} \Mod{p}$ for $j\in J$. Consequently,
\begin{align}\label{Eq:constant term}
\begin{split}
  \alpha^{p^N-1}&\equiv \left(\alpha^{(p^N-1)/(p-1)}\right)^{p-1}\Mod{p}\\
  &\equiv \left(\alpha^{p^{N-1}+p^{N-2}+\cdots +p+1}\right)^{p-1}\Mod{p}\\
  &\equiv \left(\prod_{j\in J}\alpha^{p^j}\right)^{p-1} \Mod{p}\\
  &\equiv \left((-1)^{N+1}a_N\right)^{p-1} \Mod{p}\\
  &\equiv 1 \Mod{p}.
  \end{split}
\end{align}
    From \cite{Robinson}, we have that the order, modulo $m\in \{p,p^2\}$, of the companion matrix $\CC$ for the characteristic polynomial $f(x)$ of $\U_f$ is $\pi(m)$. Thus, item \ref{Per:I0} follows from the fact that the eigenvalues of $\CC$ are precisely $\alpha^{p^j}$ for $j\in J$. Since $f(x)$ is $p$-irreducible, the zeros of $f(x)$ modulo $p^2$ are precisely the unique Hensel lifts $s_j$ of $r_j$. Hence, it follows from \cite{Robinson} that  $\ord_{p^2}(s_j)=\pi(p^2)$, which establishes item \ref{Per:I1}.
    Item \ref{Per:I2} follows from item \ref{Per:I0} and \eqref{Eq:constant term}, while item \ref{Per:I3} follows from \cite{Robinson}. If $\pi(p^2)=\pi(p)$, then $\alpha^{\pi(p)}\equiv 1 \pmod{p^2}$, and the zeros of $f(x)$ modulo $p^2$ are simply $\alpha^{p^j} \Mod{p^2}$ with $j\in J$, which are the Hensel lifts of $\alpha^{p^j} \Mod{p}$ in this case, and item \ref{Per:I4} is established.
  \end{proof}

We are now in a position to present the proof of Theorem \ref{Thm:Main1}.
\begin{proof}[Proof of Theorem \ref{Thm:Main1}]
Let $g(x)=x^p$ so that $(f\circ g)(x)=f(x^p)$. From Theorem \ref{Thm:Disccomp}, we have that
\begin{equation}\label{Eq:Delfcircg} \abs{\Delta(f\circ g)}=\abs{p^{pN}\Delta(f)^p}.
\end{equation} 
Let $K=\Q(\theta^p)$, where $f(\theta^p)=0$, and let $L=\Q(\theta)$. Let $\Z_L$ denote the ring of integers of $L$.
Since $f(x)$ is monogenic, we have that $\Delta(K)=\Delta(f)$. Thus, it follows from Theorem \ref{Thm:CD} that
\[\Delta(f)^p \mbox{ divides } \abs{\Delta(L)}=\dfrac{\abs{\Delta(f\circ g)}}{[\Z_L:\Z[\theta]]^2},\]
which implies, from  \eqref{Eq:Delfcircg}, that
\[[\Z_L:\Z[\theta]]^2 \mbox{ divides } \abs{\dfrac{\Delta(f\circ g)}{\Delta(f)^p}}=p^{pN}.\]
Hence, the monogenicity of $f(x^p)$ is completely determined by the prime $p$.

 We apply Theorem \ref{Thm:Dedekind} to $T(x):=f(x^p)$ using the prime $p$.
 Since $f(x)$ is irreducible in $\F_p[x]$, we have that $\overline{f}(x^p)=f(x)^p$, and
 we can let
\[g(x)=f(x) \quad \mbox{and}\quad h(x)=f(x)^{p-1}.\]
 Then
     \[pF(x)=g(x)h(x)-T(x)=f(x)^{p}-f(x^p),\] so that
     \begin{equation}\label{Eq:pF}
     pF(\alpha)\equiv f(\alpha)^p-f(\alpha^p)\equiv  -f(\alpha^p) \Mod{p^2}.
     \end{equation}
     Since $f(x)$ is irreducible in $\F_p[x]$, we conclude that $\gcd(\overline{f},\overline{F})\in \{1,\overline{f}\}$ in $\F_p[x]$. Hence, it follows from Theorem \ref{Thm:Dedekind} and \eqref{Eq:pF} that
     \begin{align*}
      [\Z_{L}:\Z[\theta]]\equiv 0\Mod{p}&\ \Longleftrightarrow \ \gcd(\overline{f},\overline{F})=\overline{f}\\
       &\ \Longleftrightarrow \ F(\alpha)\equiv 0 \Mod{p}\\
       &\ \Longleftrightarrow \ f(\alpha^p)\equiv 0 \Mod{p^2}.
     \end{align*}
     Thus,
     \begin{equation}\label{Eq:Firstequivalency}
     f(x^p) \mbox{ is monogenic if and only if } f(\alpha^p)\not \equiv 0 \Mod{p^2}.
     \end{equation}

          As in Lemma \ref{Lem:Main1}, let $r_j=\alpha^{p^j}$ with $j\in J$, be the zeros of $f(x)$ in $\F_{p^N}$, and let $s_j$ be the unique Hensel lift modulo $p^2$ of $r_j$. Note that $r_0=s_0=\alpha$.

          Suppose that $\pi(p^2)\ne \pi(p)$, so that $\pi(p^2)=p\pi(p)$ by Lemma \ref{Lem:Main1}. Then $\ord_{p^2}(s_j)=p\pi(p)$ for all $j\in J$ by Lemma \ref{Lem:Main1}. In particular, we have that $\ord_{p^2}(s_0)=\ord_{p^2}(s_1)=p\pi(p)$. If $f(\alpha^{p})\equiv 0 \Mod{p^2}$, then $s_1=r_1$, and we arrive at the contradiction \[\ord_{p^2}(s_1)=\ord_{p^2}(\alpha^p)=\ord_{p^2}(\alpha)/p=\pi(p).\] Hence, $f(\alpha^{p})\not \equiv 0 \Mod{p^2}$.

          Conversely, if $\pi(p^2)=\pi(p)$, then $f(\alpha^p)\equiv 0 \pmod{p^2}$ by Lemma \ref{Lem:Main1}. Hence, we have that
          \begin{equation}\label{Eq:Secondequivalency}
            \pi(p^2)\ne \pi(p) \mbox{ if and only if } f(\alpha^p)\not \equiv 0 \Mod{p^2}.
          \end{equation}
The theorem then follows by combining \eqref{Eq:Firstequivalency} and \eqref{Eq:Secondequivalency}.
          \end{proof}

\begin{proof}[Proof of Corollary \ref{Cor:Main1}]
By item \ref{Per:I2} of Lemma \ref{Lem:Main1}, we have that $\pi(p)\not \equiv 0 \pmod{p}$. Hence, the corollary follows from Theorem \ref{Thm:Main1} and item \ref{Per:I3} of Lemma \ref{Lem:Main1}.
\end{proof}

\section{Infinite Families Illustrating Theorem \ref{Thm:Main1}}
Although we only require a special case of the following lemma, we provide a proof in more generality because the lemma is of some interest in its own right.
\begin{lemma}\label{Lem:FU}
   Let $f(x)\in \Z[x]$ be irreducible over $\Q$, and let $m\ge 1$ be an integer. Suppose that $f(\varepsilon)=0$ and that $\varepsilon:=\lambda_0$ is a fundamental unit of $K=\Q(\varepsilon)$. Then $f(x^m)$ is irreducible over $\Q$.
  \end{lemma}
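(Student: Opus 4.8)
The plan is to use Capelli's theorems to convert the irreducibility of $f(x^m)$ into a statement about whether the root $\varepsilon$ is a prescribed kind of power inside $K=\Q(\varepsilon)$, and then to kill every such possibility using the hypothesis that $\varepsilon$ is a \emph{fundamental} unit. The case $m=1$ is immediate, so I assume $m\ge 2$. First I would apply Theorem \ref{Thm:Capelli1} with $h(x)=x^m$ and $\alpha=\varepsilon$: then $f(x^m)$ is reducible over $\Q$ if and only if $x^m-\varepsilon$ is reducible over $K$. Feeding this into Theorem \ref{Thm:Capelli2} (with $c=m$) shows that this happens if and only if either (i) there is a prime $p\mid m$ with $\varepsilon=\gamma^p$ for some $\gamma\in K$, or (ii) $4\mid m$ and $\varepsilon=-4\gamma^4$ for some $\gamma\in K$. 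So it suffices to rule out both (i) and (ii).

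The main tool for doing so is the structure of the unit group. By Dirichlet's unit theorem, $\Z_K^{\times}\cong \mu_K\times \Z^r$, where $\mu_K$ is the group of roots of unity in $K$ and $r\ge 1$ is the unit rank (which is positive, since a fundamental unit exists by hypothesis). Fix a fundamental system $\lambda_0,\lambda_1,\dots,\lambda_{r-1}$ with $\varepsilon=\lambda_0$, and let $\phi\colon \Z_K^{\times}\to \Z^r$ be the projection onto the free part, so that $\phi(\varepsilon)=(1,0,\dots,0)$ and $\ker\phi=\mu_K$. The guiding principle is that a fundamental unit $\lambda_0$ cannot be a proper power of a unit in the free part, and both (i) and (ii) will be seen to force exactly such a relation.

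For (i), suppose $\varepsilon=\gamma^p$ with $\gamma\in K$. Since $\varepsilon\in\Z_K$, the element $\gamma$ is a root of the monic polynomial $x^p-\varepsilon\in\Z_K[x]$, hence $\gamma\in\Z_K$; and since $\gamma^{-1}=\varepsilon^{-1}\gamma^{p-1}\in\Z_K$, in fact $\gamma\in\Z_K^{\times}$. Applying $\phi$ gives $p\,\phi(\gamma)=\phi(\varepsilon)=(1,0,\dots,0)$, so the first coordinate satisfies $p\cdot(\phi(\gamma))_0=1$, which is impossible for a prime $p\ge 2$. For (ii), I would first rewrite $-4\gamma^4=-(2\gamma^2)^2$ and set $\delta:=2\gamma^2\in K$, so that $\varepsilon=-\delta^2$, i.e.\ $\delta^2=-\varepsilon$. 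As before, $\delta^2=-\varepsilon$ is a unit, hence $\delta\in\Z_K^{\times}$; applying $\phi$ and using $-1\in\mu_K=\ker\phi$ yields $2\,\phi(\delta)=\phi(-\varepsilon)=\phi(\varepsilon)=(1,0,\dots,0)$, so $2\cdot(\phi(\delta))_0=1$, again impossible. Thus neither (i) nor (ii) can hold, and $f(x^m)$ is irreducible over $\Q$.

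The only non-formal step — and the one I would flag as the crux — is the reduction in case (ii): the observation that $-4\gamma^4=-(2\gamma^2)^2$ collapses the fourth-power exception of Theorem \ref{Thm:Capelli2} to the same assertion that $\lambda_0$ is twice a free-part exponent, so that it is dispatched by the identical unit-group argument as case (i). Everything else is routine bookkeeping with Dirichlet's theorem and the observation that $p$-th roots and square roots of units are again units.
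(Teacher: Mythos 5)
Your proof is correct and follows the same overall strategy as the paper: Capelli's two theorems reduce the problem to showing that $\varepsilon$ is neither a $p$-th power in $K$ for a prime $p\mid m$ nor of the form $-4\gamma^4$, and Dirichlet's unit theorem rules these possibilities out. The interesting divergence is in the $-4\gamma^4$ case. The paper dismisses it in one line by asserting that $\NN_{K/\Q}(-4\gamma^4)\ne\pm 1$; but since $\gamma$ need not be an algebraic integer there, that computation only yields $\abs{\NN_{K/\Q}(\gamma)}=2^{-n/2}$ with $n=[K:\Q]$, which is absurd only when $n$ is odd --- for the even-degree fields occurring in the paper's own quartic and sextic examples the assertion is not actually justified as written. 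Your substitution $\delta=2\gamma^2$, which converts the exceptional case into $\delta^2=-\varepsilon$ with $\delta$ a unit and then runs the same free-part projection argument ($2(\phi(\delta))_0=1$ in $\Z$ is impossible), closes that gap and unifies the two cases. In the $\varepsilon=\gamma^p$ case your write-up is also slightly more careful than the paper's: you justify that $\gamma$ is an algebraic integer (norm $\pm 1$ alone does not make a non-integer a unit), and your projection $\phi$ with kernel $\mu_K$ handles arbitrary roots of unity, whereas the paper writes $\gamma=\pm\varepsilon^e\prod_i\lambda_i^{e_i}$, tacitly assuming $\mu_K=\{\pm 1\}$; the punchline ($ep=1$ is impossible) is identical.
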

  \begin{proof}
  Let
  \[\Lambda=\{\varepsilon, \lambda_1,\lambda_2,\ldots ,\lambda_n\}\]
  be a system of fundamental units for $\Z_K$, where $\Z_K$ is the ring of integers of $K$, so that the rank of the unit group in $\Z_K$ is $n+1$. We let $\NN$ denote the algebraic norm $\NN_{K/\Q}$.
       Assume, by way of contradiction, that $f(x^m)$ is reducible over $\Q$. Then, since $f(x)$ is irreducible over $\Q$, we have by Theorem \ref{Thm:Capelli1} and Theorem \ref{Thm:Capelli2} that, for some $\gamma\in K$,  either $\varepsilon=\gamma^p$ for some prime $p$ dividing $m$, or $4\mid m$ and $\varepsilon=-4\gamma^4$. We see easily that the case $\varepsilon=-4\gamma^4$ is impossible since $\NN(-4\gamma^4)\ne \pm 1$.
        So suppose that $\varepsilon=\gamma^p$. Then
  \[\NN(\gamma)^p=\NN(\gamma^p)=\NN(\varepsilon)=\pm 1,\] which implies that $\NN(\gamma)=\pm 1$. 
  Therefore, $\gamma$ is a unit and
  \[\gamma=\pm \varepsilon^e\prod_{i=1}^n\lambda_i^{e_i},\] for some $e,e_i\in \Z$.
   Consequently,
  \[\varepsilon=\gamma^p=(\pm 1)^p\varepsilon^{ep}\prod_{i=1}^n\lambda_i^{e_ip},\] which implies that
  \[(\pm 1)^p\varepsilon^{ep-1}\prod_{i=1}^n\lambda_i^{e_ip}=1.\] Hence, since the elements of $\Lambda$ are multiplicatively independent, it follows that $ep-1=0$, which is impossible.
  \end{proof}

\begin{ex}\label{Ex:Quadratics}
  Let $k\ge 1$ be an integer, and let $f(x)=x^2-kx-1$.\\  We assume that $k\not \equiv 0 \Mod{4}$ and that $\D:=(k^2+4)/\gcd(2,k)^2$ is squarefree. Although these polynomials were investigated in \cite{JonesTJM}, we provide a sketch here. It is easy to see that there are infinitely many values of $k$ for which these conditions hold. Then $f(x)$ is irreducible over $\Q$ and $f(\varepsilon)=0$, where $\varepsilon=(a+\sqrt{a^2+4})/2$. Since $\D$ is squarefree, we conclude that $f(x)$ is monogenic if $k\equiv 1 \pmod{2}$, and the monogenicity of $f(x)$ when $k\equiv 0 \pmod{2}$ is determined completely by the prime $p=2$. Applying Theorem \ref{Thm:Dedekind} with $T(x):=f(x)$ and $p=2$ in the case when $k\equiv 0 \pmod{2}$, we can let
  \[g(x)=(x-1)(x+1) \quad \mbox{and} \quad h(x)=1.\]
  Then
   \[T(x)=\dfrac{g(x)h(x)-T(x)}{2}=\dfrac{(x^2-1)-(x^2-kx-1)}{2}=(k/2)x,\]
   so that $\gcd(\overline{T},\overline{g})=1$ since $k\not \equiv 0 \pmod{4}$. Thus, $f(x)$ is monogenic. It follows from \cite{Y} that $\varepsilon=(a+\sqrt{a^2+4})/2$ is the fundamental unit of $\Q(\sqrt{\D})$. Therefore, we deduce from Lemma \ref{Lem:FU} that $f(x^p)$ is irreducible over $\Q$ for all primes $p$. Since there are infinitely many primes $p$ for which $f(x)$ is irreducible in $\F_p[x]$ by the Chebotarev density theorem (or the simpler theorem of Frobenius) \cite{Jan}, there are infinitely many primes $p$ for which $f(x)$ is $p$-irreducible. It is then clear that Theorem \ref{Thm:Main1} applies to these polynomials. To illustrate Theorem \ref{Thm:Main1} for such polynomials, a computer search reveals that the only non-monogenic polynomials $f(x^p)$ (so that $\pi(p^2)=\pi(p)$),  with $1\le k\le 50$ and $p\le 11$, are given in the following table:
     \begin{table}[h]
 \begin{center}
\begin{tabular}{ccc}
 $k$ &  $p$ & $\pi(p^2)=\pi(p)$\\ \hline \\[-8pt]
 $\{5,13,22,23,24,31,32,41,49,50\}$ & 3 & 8\\ [2pt]
 $\{7,8,18,43,44\}$ & 5 & 12\\ [2pt]
 $\{25\}$ & 7 & 16\\ [2pt]
$\{19,20\}$ & 11 & 8\\ [2pt]
$\{5\}$ & 11 & 24\\ [2pt]
 \end{tabular}
\end{center}
\caption{Non-monogenic $f(x^p)$ and the corresponding $\Upsilon_f$-primes $p$}
 \label{T:1}
\end{table}
\begin{rem}
  The $\Upsilon_f$-primes in Table \ref{T:1} are also known as \emph{$k$-Wall-Sun-Sun primes}, or simply \emph{Wall-Sun-Sun primes} when $k=1$. The existence of Wall-Sun-Sun primes is still an open question.
\end{rem}
\end{ex}

\begin{ex}\label{Ex:Shanks}
  Let $k\ge 1$ be an integer, and let $f(x)=x^3-kx^2-(k+3)x-1$.\\ These polynomials are known as the ``simplest cubics", and were studied extensively by Daniel Shanks \cite{Shanks}. We assume that $k\not \equiv 3 \Mod{9}$ and $(k^2+3k+9)/\gcd(3,k)^2$ is squarefree. Power-compositional Shanks polynomials were investigated in \cite{JonesFACM}, and we omit the details since they are similar to Example \ref{Ex:Quadratics}.
\end{ex}

\begin{ex}\label{Ex:AO}
  Let
  \[f(x)=x^4-2(2k+1)x^3+((2k+1)^2-d+2)x^2-2(2k+1)x+1,\] where $k\in \Z$ and $d\in \{-2,2\}$
  such that
  \[\D:=(4k^2-4k-d+1)(4k^2+12k-d+9)\] is squarefree. Note that, for each choice of $d$, there exist infinitely many values of $k$ such that $\D$ is squarefree \cite{BB} and $\D\ge 1024$. For such values of $d$ and $k$, let $\alpha=2k+1+\sqrt{d}$. Observe that $\alpha\not \in \Z$ so that $\alpha^2-4$ is not a square in $\Z$. Under these conditions, Aoki and Kishi \cite{AK} proved that $f(x)$ is irreducible over $\Q$, $\Gal_{\Q}(f)\simeq D_4$ and $\varepsilon$ is a fundamental unit of $K=\Q(\varepsilon)$, where $f(\varepsilon)=0$. It then follows from Lemma \ref{Lem:FU} that $f(x^m)$ is irreducible over $\Q$ for any integer $m\ge 1$. Consequently, $f(x)$ is $p$-irreducible for any prime $p$ for which $f(x)$ is irreducible over $\F_p$; and since $D_4$ has an element of order 4, there exist infinitely many such primes $p$ by the Chebotarev density theorem.

  Next, we address the monogenicity of $f(x)$. 
  Let $\Z_K$ denote the ring of integers of $K=\Q(\varepsilon)$. A computer calculation reveals that $\Delta(f)=64\D$. Since $\D$ is squarefree, it follows that the monogenicity of $f(x)$ is completely determined by the prime $p=2$. We use Theorem \ref{Thm:Dedekind} with $T(x):=f(x)$ and $p=2$. Then
  $\overline{T}(x)=(x^2+x+1)^2$, and we can let $g(x)=h(x)=x^2+x+1$. Thus,
  \begin{align*}
  F(x)&=\dfrac{g(x)h(x)-T(x)}{2}\\
  &=\dfrac{(x^2+x+1)^2-f(x)}{2}\\
  &=(2k+2)x^3-(2k^2+2k+d/2)x^2+(2k+2)x,
  \end{align*}
  which implies that $\overline{F}(x)=x^2$. Hence, $\gcd(\overline{g},\overline{F})=1$, and we conclude from Theorem \ref{Thm:Dedekind} that
   \begin{equation}\label{Eq:Condition}
  [\Z_K:\Z(\varepsilon)]\not \equiv 0 \Mod{2}.
  \end{equation} Consequently, $f(x)$ is monogenic.

  Thus, we have shown that $f(x)$ represents an infinite family of polynomials satisfying the hypotheses of Theorem \ref{Thm:Main1}. For such polynomials, a computer search reveals that the only non-monogenic polynomials $f(x^p)$,  with $-100\le k\le 100$ and $p\le 97$, are indicated in the following table:
  \begin{table}[h]
 \begin{center}
\begin{tabular}{crcc}
 $k$ & $d$ & $p$ & $\pi(p^2)=\pi(p)$\\ \hline \\[-8pt]
$\{-70,-21\}$ & $-2$ & 7 & 25\\ [2pt]
$\{20,69\}$ & $-2$ & 7 & 50\\ [2pt]
$\{k\equiv 8 \Mod{25}\}$ & $2$ & 5 & 13\\ [2pt]
$\{k\equiv 16 \Mod{25}\}$ & $2$ & 5 & 26\\ [2pt]
 \end{tabular}
\end{center}
\caption{Non-monogenic $f(x^p)$ and the corresponding $\Upsilon_f$-primes $p$}
 \label{T:2}
\end{table}
\end{ex}

\begin{ex}\label{Ex:Schopp}
  Let
  \begin{equation}\label{Eq:FamSchopp}
  f(x)=x^4-kx^3+b(k-1)x^2+2b^2x-b^3,
  \end{equation} where $k\in \Z$ and $b\in \{-1,1\}$, with $k\le 3$ when $b=1$ and $k\le -5$ when $b=-1$, 
  such that
  \[\D:=(k^2+4b)(4k-16b+1)\] is squarefree (We note that, for each choice of $b$, there are infinitely many values of $k$ such that $\D$ is squarefree  \cite{BB}). It is shown in \cite{LPS} that $f(x)$ is irreducible over $\Q$,
  $\varepsilon$ is a fundamental unit of $K:=\Q(\varepsilon)$ where $f(\varepsilon)=0$, and that $\Gal_{\Q}(f)\simeq D_4$, with $L:=\Q(\sqrt{k^2+4b})$ a quadratic subfield of $K$.  Thus, $f(x^p)$ is irreducible over $\Q$ for every prime $p$
  by Lemma \ref{Lem:FU}, and there exist infinitely many primes $p$ for which $f(x)$ is irreducible in $\F_p[x]$ by the Chebotarev density theorem.  Consequently, for every prime $p$ for which $f(x)$ is irreducible in $\F_p[x]$, we have that $f(x)$ is $p$-irreducible.

  We show next that $f(x)$ is monogenic. Let $\Z_K$ denote the ring of integers of $K$. A computer calculation gives
  \[\Delta(f)=b^6(k^2+4b)^2(4k-16b+1)=(k^2+4b)^2(4k-16b+1).\] Since $\D$ is squarefree, we see that $k\equiv 1\pmod{2}$, and so $\Delta(L)=k^2+4b$.
  Therefore, $(k^2+4b)^2$ divides $\Delta(K)$ by Theorem \ref{Thm:CD}. Since $4k-16b+1$ is squarefree, it follows from \eqref{Eq:Dis-Dis} that $\Delta(f)=\Delta(K)$ and $f(x)$ is monogenic.

  Therefore, Theorem \ref{Thm:Main1} applies to the families in \eqref{Eq:FamSchopp}. A computer search shows that $f(x^p)$ is monogenic when $-5000<k\le -5$ and $p\le 17$. That is, no situations were found such that $\pi(p^2)=\pi(p)$.
\end{ex}

Each polynomial in any of the previous families has a zero that is a fundamental unit, and we were able to use Lemma \ref{Lem:FU} to facilitate our investigation. However, in general, it is usually difficult to determine whether $f(x)$ has a zero that is a fundamental unit. Fortunately, in the situation when we have no knowledge of whether $f(x)$ has a zero that is a fundamental unit, we can use the following lemma, which is a special case of \cite[Corollary 5.2]{G}. 
\begin{lemma}\label{Lem:G}
  Let $f(x)\in \Z[x]$ be monic and irreducible over $\Q$. If $f(x)$ is not a cyclotomic polynomial, then there exist at most finitely many primes $p$ such that $f(x^p)$ is reducible over $\Q$.
\end{lemma}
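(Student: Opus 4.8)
The plan is to translate reducibility of $f(x^p)$ into a statement about $\alpha$ being a perfect $p$-th power in $K := \Q(\alpha)$, using the Capelli theorems, and then to bound the offending primes by a norm argument together with Dirichlet's unit theorem. Writing $f(\alpha)=0$ and taking $h(x)=x^p$ in Theorem \ref{Thm:Capelli1}, I would first observe that $f(x^p)$ is reducible over $\Q$ if and only if $x^p-\alpha$ is reducible over $K$. Since $p$ is prime we have $4\nmid p$, so the second alternative in Theorem \ref{Thm:Capelli2} cannot occur, and Theorem \ref{Thm:Capelli2} reduces this to the existence of $\gamma\in K$ with $\alpha=\gamma^p$. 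It therefore suffices to prove that this equation is solvable in $K$ for only finitely many primes $p$. Because $f$ is monic, $\alpha\in\Z_K$; and from $\gamma^p=\alpha$ the element $\gamma$ is integral over $\Z_K$, so $\gamma\in\Z_K$. We may assume $\alpha\ne 0$, since the only monic irreducible polynomial with $\alpha=0$ is $f(x)=x$.

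Next I would split on the size of $\abs{\NN(\alpha)}=\abs{f(0)}$, where $\NN=\NN_{K/\Q}$. Taking norms in $\alpha=\gamma^p$ gives $\NN(\gamma)^p=\NN(\alpha)=\pm f(0)$, with $\NN(\gamma)\in\Z$. If $\abs{f(0)}\ge 2$, then $\abs{\NN(\gamma)}\ge 2$, so $2^p\le\abs{f(0)}$, which leaves only finitely many primes $p$. This disposes of every $f$ whose root is not a unit, and the whole difficulty is concentrated in the complementary case $\abs{f(0)}=1$, in which $\alpha$, and hence $\gamma$, is a unit of $\Z_K$.

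For this unit case, I would argue as follows. If $\alpha$ is a root of unity, then its minimal polynomial $f$ is cyclotomic, contrary to hypothesis; so $\alpha$ is a unit of infinite order. By Dirichlet's unit theorem $\Z_K^\times\cong \mu_K\times\Z^r$, and projecting onto the free part gives a homomorphism $\phi\colon\Z_K^\times\to\Z^r$ with $\phi(\alpha)\ne 0$. The relation $\alpha=\gamma^p$ yields $\phi(\alpha)=p\,\phi(\gamma)$, so $p$ divides every coordinate of the nonzero vector $\phi(\alpha)\in\Z^r$, and hence divides the nonzero integer $\gcd$ of its coordinates; only finitely many primes $p$ can do this. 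Combining the cases proves the lemma. The main obstacle is exactly this unit case: the norm is then useless, and the finiteness must instead be extracted from the finite rank of $\Z_K^\times$.
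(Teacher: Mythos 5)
Your proof is correct, but it cannot follow the paper's route because the paper gives no proof at all: Lemma \ref{Lem:G} is quoted as a special case of \cite[Corollary 5.2]{G}, so there is nothing internal to compare against. What you have written is in effect a self-contained proof of the cited result, and it is a natural common generalization of two arguments that do appear elsewhere in the paper. Your norm step (taking $\NN_{K/\Q}$ of $\alpha=\gamma^p$ to get $2^p\le\abs{f(0)}$ when $\abs{f(0)}\ge 2$) strengthens the paper's Lemma \ref{Lem:HJMJOU}, which only treats squarefree $f(0)$; and your unit-group step generalizes the proof of Lemma \ref{Lem:FU}: there the hypothesis that $\alpha$ is a fundamental unit forces the exponent relation $ep-1=0$ outright, whereas you assume only that $\alpha$ is a unit of infinite order and instead conclude that $p$ divides every coordinate of the fixed nonzero vector $\phi(\alpha)\in\Z^r$, which bounds $p$. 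All the individual steps check out: the integrality of $\gamma$, the reduction via Theorems \ref{Thm:Capelli1} and \ref{Thm:Capelli2} (the $-4\gamma^4$ branch is indeed vacuous for prime exponent), the fact that $\abs{\NN(\gamma)}\ge 2$ once $\abs{f(0)}\ge 2$, and the identification of roots of unity with cyclotomic minimal polynomials. One pedantic point: $f(x)=x$ really is a counterexample to the lemma as literally stated (monic, irreducible, not cyclotomic, yet $x^p$ is reducible for every $p$), so your exclusion of $\alpha=0$ is not merely a convenience; it is the place where the degenerate case must be discarded, and it would be worth saying explicitly that the statement tacitly assumes $f(0)\ne 0$.
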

We use Lemma \ref{Lem:G} to construct an infinite family for which Theorem \ref{Thm:Main1} applies.
\begin{ex}\label{NYJM}
It is shown in \cite{JonesNYJM} that there exist infinitely many primes $\rho$ such that
\[f(x)=x^6+(180\rho-1)x^3+1\]
is monogenic with $\Gal_{\Q}(f)\simeq D_6$, the dihedral group of order 12. Since $D_6$ contains an element of order 6, it follows from the Chebotarov density theorem that there exist infinitely many primes $p$ for which $f(x)$ is irreducible in $\F_p[x]$. Since $f(x)$ is clearly not a cyclotomic polynomial, we deduce from Lemma \ref{Lem:G} that there exist infinitely many primes $p$ such that $f(x)$ is $p$-irreducible. We say a pair of primes $(\rho,p)$ is \emph{viable} if $f(x)$ is monogenic and $p$-irreducible. Therefore, for such a viable pair $(\rho,p)$, we can apply Theorem \ref{Thm:Main1} to $f(x)$. A computer search verifies that out of the 121 viable pairs $(\rho,p)$, with $\rho,p\le 97$, exactly three viable pairs are such that $f(x^p)$ is not monogenic and for which $p$ is an $\Upsilon_f$-prime. That is, for these three pairs, we have that $\pi(p^2)=\pi(p)$, while $f(x)$ is monogenic with $\pi(p^2)=p\pi(p)$ for the remaining 118 viable pairs. These three exceptions are: \[(\rho,p,\pi(p^2))\in \{(5,5,18),(13,23,36),(67,47,144)\}.\]
\end{ex}

The following lemma is a special case of \cite[Lemma 3.1]{HJMJOU}.
\begin{lemma}\label{Lem:HJMJOU}
  Let $f(x)\in \Z[x]$ be monic and irreducible over $\Q$ with $\abs{f(0}\ge 2$. If $f(0)$ is squarefree, then $f(x^p)$ is irreducible over $\Q$ for all primes $p$.
\end{lemma}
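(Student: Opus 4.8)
The plan is to apply the Capelli theorems (Theorem~\ref{Thm:Capelli1} and Theorem~\ref{Thm:Capelli2}) exactly as in the proof of Lemma~\ref{Lem:FU}, but replacing the fundamental-unit argument with a norm argument driven by the squarefreeness of $f(0)$. First I would let $\varepsilon$ be a root of $f(x)$, set $K=\Q(\varepsilon)$, and argue by contradiction: assume $f(x^p)$ is reducible over $\Q$ for some prime $p$. Since $f(x)$ is irreducible over $\Q$, Theorem~\ref{Thm:Capelli1} says $x^p-\varepsilon$ is reducible over $K$, and then Theorem~\ref{Thm:Capelli2} (with $c=p$) gives that either $\varepsilon=\gamma^q$ for some prime $q\mid p$ (so $q=p$), or $4\mid p$, which is impossible for a prime $p$. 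So the only case to rule out is $\varepsilon=\gamma^p$ for some $\gamma\in K$.

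The key step is then a norm computation. Taking the algebraic norm $\NN=\NN_{K/\Q}$ of $\varepsilon=\gamma^p$ gives
\[
\NN(\varepsilon)=\NN(\gamma)^p.
\]
Because $f$ is monic with constant term $f(0)$, the norm of $\varepsilon$ is (up to sign) $f(0)$; concretely $\NN(\varepsilon)=(-1)^{\deg f}f(0)$. Since $\gamma\in K$ and $\varepsilon$ is an algebraic integer with $\varepsilon=\gamma^p$, one checks $\gamma$ is an algebraic integer as well, so $\NN(\gamma)\in\Z$. Hence $\NN(\gamma)^p=\pm f(0)$, which forces $\NN(\gamma)^p$ to equal a squarefree integer (up to sign). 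I would note $\abs{f(0)}\ge 2$, so $\NN(\gamma)\ne\pm 1$, and therefore $\abs{\NN(\gamma)}\ge 2$; but then $\abs{\NN(\gamma)}^p$ is a perfect $p$-th power with $p\ge 2$, hence divisible by the square of any prime dividing $\NN(\gamma)$. This contradicts the squarefreeness of $f(0)$, completing the argument.

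The main obstacle is the justification that $\gamma$ is an algebraic integer, so that its norm is a rational integer rather than merely a rational number. This can be handled by observing that $\gamma$ is a root of the monic integer polynomial $x^p-\varepsilon$ over the integral closure, or more directly that $\gamma$ is a root of $f(x^p)/(\text{its factor})$ and lies in $K$; since $\varepsilon$ is integral and $\gamma^p=\varepsilon$, $\gamma$ is integral over $\Z$ and lies in $K$, hence $\gamma\in\Z_K$ and $\NN(\gamma)\in\Z$. With that in place, the divisibility contradiction is immediate. One should also confirm the $4\mid p$ branch of Theorem~\ref{Thm:Capelli2} is vacuous for prime $p$, which is clear since no prime is divisible by $4$.
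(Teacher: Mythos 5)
Your proof is correct. The paper does not actually prove Lemma~\ref{Lem:HJMJOU} --- it cites it as a special case of \cite[Lemma 3.1]{HJMJOU} --- but your argument is the natural one and runs exactly parallel to the paper's own proof of Lemma~\ref{Lem:FU}: Theorems~\ref{Thm:Capelli1} and~\ref{Thm:Capelli2} reduce everything to the case $\varepsilon=\gamma^p$, and your replacement of the fundamental-unit argument by the observation that $\gamma$ is integral, $\NN(\gamma)^p=\pm f(0)$, and a $p$-th power of an integer of absolute value at least $2$ cannot be squarefree, is sound (including the checks that $4\nmid p$ and that $\abs{f(0)}\ge 2$ rules out $\NN(\gamma)=\pm 1$).
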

\begin{ex}\label{Ex:GenQuadratics}
  Under certain restrictions on positive integers $a$ and $b$, quadratics of the form $f(x)=x^2-ax-b$ were investigated in \cite{JonesAJM}. However, the specific quadratics in Example \ref{Ex:Quadratics} were addressed in more generality in \cite{JonesTJM} than they were in \cite{JonesAJM}. On the other hand, the techniques used in \cite{JonesTJM} and \cite{JonesAJM} are nevertheless similar, and so we omit a complete discussion here of the quadratics of the form $f(x)=x^2-ax-b$. But we do provide a single example from \cite{JonesAJM}.
  Let \[f(x)=x^2-11x-43,\] which is easily seen to be monogenic. Observe that $f(x)$ is irreducible modulo 2. By Lemma \ref{Lem:HJMJOU}, $f(x^p)$ is irreducible over $\Q$ for all primes $p$. Thus, it follows that $f(x)$ is $p$-irreducible for infinitely many primes $p$. Hence, we can apply Theorem \ref{Thm:Main1} to $f(x)$. A computer determines that $f(x^p)$ is monogenic with $\pi(p^2)=p\pi(p)$ in $\Upsilon_f$ for all $p$-irreducible primes $p\le 313$, except $p\in \{2,5\}$. For these exceptions, we have that $\pi(2^2)=\pi(2)=3$ and $\pi(5^2)=\pi(5)=24$ in $\Upsilon_f$.
\end{ex}
For the construction of the next family, we apply Lemma \ref{Lem:HJMJOU} to a quadrinomial taken from \cite{JonesCM}.
\begin{ex}\label{Ex:CMquintics}
  Let
  \begin{equation}\label{Eq:FamCMquintics}
  f(x)=x^5+34x^4+9x^3+q,
  \end{equation} where $q$ is a prime such that $q\nmid 1120$, $q>44$ and
    \[\D:=(q+3542940)(3125q-56)\] is squarefree. Note that there exist infinitely many such primes $q$ (see \cite[Corollary 2.12]{HJMJOU}). It follows from \cite[Theorem 1.1]{JonesCM} (with $n=5$ and $m=1$) that $f(x)$ is irreducible over $\Q$, and that $f(x)$ is monogenic. Since $f(0)$ is squarefree with $f(0)>2$, it follows from Lemma \ref{Lem:HJMJOU} that $f(x^p)$ is irreducible over $\Q$ for all primes $p$. Observe that
    \begin{equation*}\label{Eq:5}
    f(x)\equiv x^5+x^3+1 \pmod{2},
    \end{equation*} which is irreducible in $\F_2[x]$, and consequently, $\Gal_{\Q}(f)$ contains an element of order 5. Thus, by the Chebotarev density theorem, there exist infinitely many primes $p$ for which $f(x)$ is irreducible modulo $p$. Consequently, there exist infinitely many primes $p$ such that $f(x)$ is $p$-irreducible, and Theorem \ref{Thm:Main1} applies in these situations. A computer search with $47\le q \le 97$
   and $p\le 17$ reveals that $f(x^p)$ is monogenic with $\pi(p^2)=p\pi(p)$, except when $p=2$ and $q\in \{47,59,67,71,79,83\}$. In these exceptional cases, we have that $\pi(2^2)=\pi(2)=31$.
\end{ex}




\begin{thebibliography}{99}

\bibitem{AK}  M. Aoki and Y. Kishi, \emph{On systems of fundamental units of certain quartic fields}, Int. J. Number Theory {\bf 11} (2015), no. 7, 2019--2035.

\bibitem{BB}  A. Booker and T. D. Browning, \emph{Square-free values of reducible polynomials}, Discrete Anal. 2016, Paper No. 8, 16 pp.



\bibitem{Cohen} H. Cohen, \emph{A Course in Computational Algebraic Number Theory}, {Springer-Verlag}, 2000.


\bibitem{Cullinan} \url{https://studylib.net/doc/8187082/the-discriminant-of-a-composition-of-two}








\bibitem{G}  N. H. Guersenzvaig, \emph{Elementary criteria for irreducibility of $f(X^r)$}, Israel J. Math. {\bf 169} (2009), 109--123.



\bibitem{HJMJOU} J. Harrington and L. Jones, \emph{The irreducibility and monogenicity of power-compositional trinomials}, Math. J. Okayama Univ. (to appear) \url{arXiv:2204.07784v1}.


\bibitem{HJmonocyclo}  J. Harrington and L. Jones, \emph{Monogenic cyclotomic compositions}, Kodai Math. J. {\bf 44} (2021), no. 1, 115--125.



\bibitem{Jan}  G. J. Janusz, \emph{Algebraic number fields}, Second edition, Graduate Studies in Mathematics, {\bf 7}. American Mathematical Society, Providence, RI, 1996.

    \bibitem{JonesNYJM}  L. Jones, \emph{Infinite families of reciprocal monogenic polynomials and their Galois groups}, New York J. Math. {\bf 27} (2021), 1465--1493.
\bibitem{JonesCM}  L. Jones, \emph{Infinite families of monogenic quadrinomials, quintinomials and sextinomials}, Colloq. Math. {\bf 169} (2022), no. 1, 1--10.


\bibitem{JonesTJM} L. Jones, \emph{A new condition for $k$-Wall-Sun-Sun primes},  Taiwanese J. Math. {\bf 28} (2024), no. 1, 17--28.

\bibitem{JonesFACM} L. Jones, \emph{On the monogenicity of power-compositional Shanks polynomials},  Funct. Approx. Comment. Math. {\bf 69} (2023), no. 1, 93--103.

\bibitem{JonesAJM} L. Jones, \emph{Generalized Wall-Sun-Sun primes and monogenic power-compositional trinomials}, Albanian J. Math. {\bf 17} (2023), no. 2, 3--17.




\bibitem{LPS}  F. Lepr\'{e}vost, M. Pohst and A. Sch\"{o}pp, \emph{Units in some parametric families of quartic fields}, Acta Arith. {\bf 127} (2007), no. 3, 205--216.




\bibitem{Neukirch} J. Neukirch, \emph{Algebraic Number Theory}, Springer-Verlag, Berlin, 1999.



\bibitem{Robinson}  D. W. Robinson, \emph{A note on linear recurrent sequences modulo $m$}, Amer. Math. Monthly {\bf 73} (1966), 619--621.

\bibitem{S} A.~Schinzel, \emph{Polynomials with Special Regard to Reducibility}, Encyclopedia of Mathematics and its Applications, {\bf 77}, Cambridge University Press, Cambridge, 2000.



\bibitem{Shanks} D. Shanks, \emph{The simplest cubic fields}, Math. Comp. {\bf 28} (1974), 1137--1152.








\bibitem{Y}  H. Yokoi, \emph{On real quadratic fields containing units with norm $-1$}, Nagoya Math. J. {\bf 33} (1968), 139--152.
\end{thebibliography}
\end{document}